\documentclass[12 pt]{amsart}

\usepackage{booktabs}
\usepackage[margin=2.3cm]{geometry} 
\usepackage[T1]{fontenc}
\usepackage[utf8]{inputenc}
\usepackage{lmodern}
\usepackage{microtype}
\usepackage{amsmath,amssymb,amsthm,mathtools}
\usepackage{enumitem}
\usepackage[colorlinks=true,
linkcolor=blue,
citecolor=blue,
urlcolor=blue]{hyperref}

\newtheorem{theorem}{Theorem}[section]

\theoremstyle{definition}
\newtheorem{definition}[theorem]{Definition}

\theoremstyle{remark}

\newcommand{\R}{\mathbb{R}}

\newcommand{\rhoT}{\rho_T}

\begin{document}

	\title[$P$-Contractions that are not Enriched  ]
	{On The Existence of \(P\)-Contractions  that are not
		Enriched Contractions}
	
	\author{Faruk Temur}
	\address{Department of Mathematics\\
		Izmir Institute of Technology, Urla, Izmir, 35430,
		Turkey}
	\email{faruktemur@iyte.edu.tr}
	\keywords{Enriched contraction, Enriched $P$-contraction,
		$P$-contraction}
	\subjclass[2020]{54H25, 47H10}
	\date{August 20, 2026}

	\maketitle
	
	\begin{abstract}
		We construct two  examples clarifying the position of enriched
		$P$-contractions among related classes of mappings on normed spaces. Our first example is a  $P$-contraction operator on a finite subset of a normed linear space that is not an enriched contraction. In the second example we have a continuous operator on the whole of a normed linear space that is a $P$-contraction but not an enriched contraction. These examples answer in negative a question raised in \cite{AltunHancerAtes2024} asking whether every enriched $P$-contraction is an enriched contraction.
		
	\end{abstract}
	
	\section{Introduction}
	
	The Banach contraction principle is one of the fundamental results of fixed
	point theory.  Many subsequent developments replace the classical
	Lipschitz condition in the definition of a contraction by inequalities involving additional quantities associated
	with the mapping.
	Popescu introduced the class of $P$-contractions, in which the usual distance
	between two points is supplemented by the difference of their displacements
	under the mapping \cite{Popescu2008}. For subsets of linear normed spaces the precise definition  is as follows.
	\begin{definition}
		Let $C$ be a nonempty subset of a normed linear space
	$(X,\lVert\cdot\rVert)$, and let $T:C\to C$ be a mapping.	We call $T$  a $P$-contraction if there exists
		$\beta\in[0,1)$ such that,	for all $x,y\in C$
		\begin{equation}
			\lVert Tx-Ty\rVert
			\leq
			\beta\left(
			\lVert x-y\rVert+
			\left|\lVert x-Tx\rVert-\lVert y-Ty\rVert\right|
			\right).
			\label{eq:pcontraction}
		\end{equation}
			\end{definition}
	Berinde and P\u{a}curar 	\cite{BerindePacurar2020}  introduced
	enriched contractions on normed spaces; the enrichment parameter permits one
	to study mappings whose suitable averaged versions are contractions.
	\begin{definition}
		Let $C$ be a nonempty subset of a normed linear space
	$(X,\lVert\cdot\rVert)$, and let $T:C\to C$ be a mapping.	We call $T$  an enriched contraction if there exist constants
		$b\geq0$ and $\theta\in[0,b+1)$ such that, for all $x,y\in C$,
		
		\begin{equation}
			\lVert b(x-y)+Tx-Ty\rVert
			\leq
			\theta\lVert x-y\rVert.
			\label{eq:enriched-contraction}
		\end{equation}
		\end{definition}
	Altun, Aslan Han\c{c}er, and Do\u{g}an Ate\c{s}
	combined these two ideas in the notion of an enriched $P$-contraction
	\cite{AltunHancerAtes2024}.
	
	\begin{definition}
		Let $C$ be a nonempty subset of a normed linear space
	$(X,\lVert\cdot\rVert)$, and let $T:C\to C$ be a mapping.	We call $T$ an enriched $P$-contraction if there exist constants
		$b\geq0$ and $\theta\in[0,b+1)$ such that, 	for all $x,y\in C$
		
		\begin{equation}
			\begin{aligned}
				\lVert b(x-y)+Tx-Ty\rVert
				\leq
				\theta\lVert x-y\rVert 
				+
				\frac{\theta}{b+1}
				\left|\lVert x-Tx\rVert-\lVert y-Ty\rVert\right|.
			\end{aligned}
			\label{eq:enriched-p-contraction}
		\end{equation}

	\end{definition}

	The purpose of this note is to answer  a question raised in \cite{AltunHancerAtes2024}, and  show that 
there are enriched $P$-contractions that are not enriched contractions.  We note that for linear operators defined on all of a normed linear space, testing \eqref{eq:pcontraction},\eqref{eq:enriched-p-contraction} with $y=-x$ shows  that a $P$-contraction is a contraction, and an enriched  $P$-contraction is an enriched contraction. Therefore any  enriched $P$-contraction that is not an enriched contraction cannot be a linear operator on entirety of a normed linear space. 

We will give two examples, both of which live in the normed linear space $(\R^2,\|\cdot\|_{\infty}).$ The first  is a $P$-contraction defined on a 5 element subset of this space that is not an enriched contraction. Our  second example is more striking. It is a continuous $P$-contraction defined on all of this space, and yet it still is not an enriched contraction. Combining with the results of \cite{AltunHancerAtes2024}, these examples completely clarify  the implication relations of contractions (C), enriched contractions (EC), $P$-contractions (PC), enriched $P$-contractions (EPC). We summarize this     by 
\begin{equation*}
C\Rightarrow EC,PC \Rightarrow EPC    \qquad 	\qquad EC\nRightarrow PC,  \qquad  \qquad   PC\nRightarrow EC.
\end{equation*}
The large quantity of articles citing \cite{BerindePacurar2020,Popescu2008} testifies to the importance of the these newer versions of contractions. 
 Our article thus clarifies the relation between these intensely studied objects of functional analysis.

	\section{The discrete counterexample}
	
	This example is on a finite subset of $(\R^2,\|\cdot\|_{\infty})$, so verification of \eqref{eq:pcontraction} reduces to a finite computation.

	\begin{theorem}\label{thm:counterexample}
		Consider the  normed space $(\R^2,\lVert\cdot\rVert_\infty)$, and the subset $C=\{r,p,q,u,v\}$ where
		\begin{equation*}
				r=\left(1/2,0\right),\qquad
			p=(1,0),\qquad
			q=(0,0),\qquad
			u=(1,2),\qquad
			v=(0,1).
		\end{equation*}
			Define $T\colon C\to C$ by
		\begin{equation}\label{eq:defT}
			T r=r,\qquad
			T p=r,\qquad
			T q=r,\qquad
			T u=p,\qquad
			T v=q.
		\end{equation}
Then \(T\)
		is a \(P\)-contraction with constant $\beta=1/2$ , but  not an enriched contraction.
	\end{theorem}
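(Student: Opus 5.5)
The plan has two independent parts: check \eqref{eq:pcontraction} with $\beta=1/2$ over all pairs in $C$, and then find one pair that violates \eqref{eq:enriched-contraction} for every admissible $b$ and $\theta$.

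For the $P$-contraction property, I first tabulate the displacements $\lVert x-Tx\rVert_\infty$:
\[
\lVert r-Tr\rVert_\infty=0,\quad \lVert p-Tp\rVert_\infty=\lVert q-Tq\rVert_\infty=\tfrac12,\quad \lVert u-Tu\rVert_\infty=\lVert (0,2)\rVert_\infty=2,\quad \lVert v-Tv\rVert_\infty=\lVert (0,1)\rVert_\infty=1 .
\]
Since \eqref{eq:pcontraction} is symmetric in $x,y$ and trivial when $x=y$, only the ten unordered pairs of distinct points need checking.

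The three pairs inside $\{r,p,q\}$ are immediate, because all three points map to $r$ and the left side of \eqref{eq:pcontraction} vanishes.

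Every pair consisting of one point of $\{u,v\}$ and one point of $\{r,p,q\}$ has left side $\lVert Tx-r\rVert_\infty=\tfrac12$, since $Tu=p$, $Tv=q$ and $\lVert p-r\rVert_\infty=\lVert q-r\rVert_\infty=\tfrac12$. For each of these six pairs the right side is at least $\tfrac12\lVert x-y\rVert_\infty\ge \tfrac12$. This holds because $u$ and $v$ have second coordinate at least $1$, while $r,p,q$ have second coordinate $0$, so $\lVert x-y\rVert_\infty\ge 1$.

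The remaining pair $(u,v)$ is the tight case, and I will verify it explicitly:
\[
\lVert Tu-Tv\rVert_\infty=\lVert p-q\rVert_\infty=1=\tfrac12\bigl(\lVert u-v\rVert_\infty+|2-1|\bigr)=\tfrac12(1+1).
\]
Here equality holds, so $\beta=1/2$ is exactly attained and \eqref{eq:pcontraction} is satisfied.

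For the failure of the enriched contraction property, I will use this same extremal pair. Suppose $b\ge0$ and $\theta\in[0,b+1)$ satisfy \eqref{eq:enriched-contraction}. Taking $x=u$, $y=v$ gives $u-v=(1,1)$ and $Tu-Tv=p-q=(1,0)$. Hence
\[
\lVert b(u-v)+Tu-Tv\rVert_\infty=\lVert (b+1,b)\rVert_\infty=b+1,
\]
while $\theta\lVert u-v\rVert_\infty=\theta$. This forces $b+1\le\theta$, which contradicts $\theta<b+1$.

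The only real design issue, which the statement already resolves, is the choice of the pair $(u,v)$. It is chosen so that $Tu-Tv$ and $u-v$ share a coordinate direction in which the sup norm adds, while the displacement difference $|2-1|$ compensates in the $P$-inequality. Beyond that, the proof is just the finite check described above.
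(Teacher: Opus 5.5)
Your proof is correct and follows the paper's argument: you use the same displacement values, check the ten unordered pairs against \eqref{eq:pcontraction} with $\beta=1/2$, and use the same extremal pair $(u,v)$ to get $\lVert (b+1,b)\rVert_\infty=b+1\le\theta$. The only difference is presentational. You handle the three pairs inside $\{r,p,q\}$ and the six mixed pairs uniformly, using that $\lVert x-y\rVert_\infty\ge 1$ for mixed pairs, whereas the paper tabulates each pair numerically.
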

	
	\begin{proof}
		The   displacement of $T$ is defined by
		$$
		\rhoT(x):=\lVert x-Tx\rVert_\infty.
		$$
		A direct calculation gives
		\begin{equation*}\label{eq:displacements}
			\rhoT(r)=0,\qquad
			\rhoT(p)=\rhoT(q)=1/2,\qquad
			\rhoT(u)=2,\qquad
			\rhoT(v)=1.
		\end{equation*}
		We show that $T$ is a $P$-contraction with constant $1/2$ by verifying the inequality \eqref{eq:pcontraction} for distinct points $x,y$. We remark that  for $x=y$ it is immediate, and by symmetry it suffices to check for only one of   the pairs $(x,y),(y,x).$ 
			\begin{table}[ht]
			\centering
			\small
			\caption{Verification of the enriched \(P\)-contractive inequality.}
			\label{tab:pairs}
			\begin{tabular}{ccc}
				\toprule
				\(\{x,y\}\)
				& \(\lVert Tx-Ty\rVert_\infty\)
				& \(\frac12\bigl(\lVert x-y\rVert_\infty+
				|\rhoT(x)-\rhoT(y)|\bigr)\) \\
				\midrule
				\(\{r,p\}\) & \(0\)          & \(1/2\) \\
				\(\{r,q\}\) & \(0\)          & \(1/2\) \\
				\(\{p,q\}\) & \(0\)          & \(1/2\) \\
				\(\{r,v\}\) & \(1/2\)    & \(1\) \\
				\(\{r,u\}\) & \(1/2\)    & \(2\) \\
				\(\{p,v\}\) & \(1/2\)    & \(3/4\) \\
				\(\{q,v\}\) & \(1/2\)    & \(3/4\) \\
				\(\{p,u\}\) & \(1/2\)    & \(7/4\) \\
				\(\{q,u\}\) & \(1/2\)    & \(7/4\) \\
				\(\{u,v\}\) & \(1\)          & \(1\) \\
				\bottomrule
			\end{tabular}
		\end{table}

		 Assume to the contrary $T$ is an enriched contraction, and therefore satisfies \eqref{eq:enriched-contraction} with some constants  $b\geq 0,\  \theta\in[0,b+1)$.
		 As  we have from 	\eqref{eq:defT}
			\[
		u-v=(1,1),
		\qquad
		Tu-Tv=p-q=(1,0),
		\qquad
		\lVert u-v\rVert_\infty=1,
		\]
		 applying \eqref{eq:enriched-contraction} to $u,v$ gives the following contradiction
		\begin{equation*}
			\lVert b(u-v)+Tu-Tv\rVert_\infty
			=\lVert(b+1,b)\rVert_\infty
			=b+1\leq \theta.
		\end{equation*}
		So $T$ cannot be an enriched contraction.
		
	\end{proof}

	\section{A nonlinear example on the whole space}
	
	With the next construction we aim to demonstrate the existence of $P$-contractions satisfying additional properties that still are not enriched contractions.  First of all, we want an example defined on all of the ambient normed space, as examples defined on arbitrarily contrived   subsets  look  unnatural.  
	Secondly, as a linear example on entirety of a  normed linear space cannot exist, continuity is the next best property we may expect.

	\begin{theorem}
		Consider the  normed space $(\R^2,\lVert\cdot\rVert_\infty)$. We define $T:\R^2\to \R^2$ using a function
		 $h:\R^2\to[0,\infty)$ as follows:
		\begin{equation}
			T(s,t)=\left(\frac{s+h(s,t)}{2},0\right), \qquad     \qquad 	h(s,t)=\max\left\{|t|,s/3,-s\right\}.
			\label{eq:def-nonlinear-T}
		\end{equation}
			Then $T$ is continuous and is a $P$-contraction with constant
		$\beta=1/2$. Nevertheless, $T$ is not an enriched contraction.
		\label{thm:whole-space-example}
	\end{theorem}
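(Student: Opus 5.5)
The proof has three parts: continuity, the failure of the enriched contraction property, and the $P$-contraction inequality. Continuity is immediate, since $h$ is a maximum of three continuous functions, so $T$ is continuous. For the failure of the enriched contraction property I would copy the mechanism of Theorem \ref{thm:counterexample}. Take $x=(1,1)$ and $y=(0,0)$. Then $h(x)=1$ and $h(y)=0$, so $Tx=(1,0)$ and $Ty=(0,0)$. Hence $x-y=(1,1)$ with $\lVert x-y\rVert_\infty=1$, while $Tx-Ty=(1,0)$. Plugging into \eqref{eq:enriched-contraction} gives $\lVert (b+1,b)\rVert_\infty=b+1\le\theta$, which contradicts $\theta<b+1$.

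For the $P$-contraction property, write $x=(s,t)$ and $y=(s',t')$, set $h=h(x)$ and $h'=h(y)$, and let $\Delta s=s-s'$, $\Delta h=h-h'$. Since $\lVert Tx-Ty\rVert_\infty=|\Delta s+\Delta h|/2$, the inequality \eqref{eq:pcontraction} with $\beta=1/2$ reduces to
\begin{equation*}
|\Delta s+\Delta h|\le \lVert x-y\rVert_\infty+|\rhoT(x)-\rhoT(y)|.
\end{equation*}
Two preliminary facts drive everything. First, $h$ is $1$-Lipschitz for $\lVert\cdot\rVert_\infty$, because each of $|t|$, $s/3$ and $-s$ is. Second, there is an explicit formula for the displacement. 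Since $x-Tx=\bigl((s-h)/2,\,t\bigr)$, a short check gives:
\begin{itemize}[label={}]
\item if $s\ge0$, then $\rhoT(x)=h$ (split according to $|t|\le s/3$ or $|t|>s/3$);
\item if $s<0$, then $\rhoT(x)=(h-s)/2$, because $(h+|s|)/2\ge h\ge|t|$.
\end{itemize}

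Next I would dispose of the easy situation. If $\Delta s$ and $\Delta h$ have opposite signs (or one of them is zero), then $|\Delta s+\Delta h|\le\max(|\Delta s|,|\Delta h|)\le\lVert x-y\rVert_\infty$ by the Lipschitz bound, and we are done. So assume, after swapping $x$ and $y$ if necessary, that $\Delta s>0$ and $\Delta h>0$. It then suffices to show $|\rhoT(x)-\rhoT(y)|\ge \Delta s+\Delta h-\lVert x-y\rVert_\infty$. The target is $\Delta h$ when $\Delta s\le\lVert x-y\rVert$ is used, or $\Delta s$ when $\Delta h\le\lVert x-y\rVert$ is used.

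The remaining work is a case split by the signs of $s$ and $s'$, using the displacement formula.
\begin{itemize}[label={}]
\item Both $s,s'\ge0$: here $\rhoT(x)-\rhoT(y)=\Delta h$, which is exactly what is needed.
\item Both $s,s'<0$: here $\rhoT(x)-\rhoT(y)=(\Delta h-\Delta s)/2$. But $h=\max(|t|,-s)$, and with $\Delta s>0$ one has $-s<-s'$, so a positive $\Delta h$ must come from the $|t|$ term. I expect this to force $\Delta h\le|\Delta t|$ together with a finer estimate giving $\Delta s+\Delta h\le \lVert x-y\rVert+(\Delta h-\Delta s)/2$.
\item Mixed case $s\ge0>s'$: here $\rhoT(x)-\rhoT(y)=h-(h'-s')/2$, and the needed bound should follow by comparing $h'\ge -s'$ with $h\ge s/3$.
\end{itemize}

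I expect the main obstacle to be the case $s,s'<0$ and the mixed case, where $\rhoT$ contains the averaged term $(h-s)/2$ and the gain is only half. There I would first try to exploit that $h'\ge -s'=|s'|$ to show that $\Delta h$ is bounded by $|\Delta t|$ minus a multiple of $\Delta s$ whenever both increments are positive. If the clean inequality fails, I would subdivide further according to which term attains each maximum. That gives at most nine linear subcases, each a routine linear inequality.
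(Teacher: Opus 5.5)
Your continuity argument and your enriched-contraction counterexample are correct; the pair $(1,1),(0,0)$ works just as well as the paper's $(1,2),(0,1)$. The gap is in the displacement formula for $s<0$, which rests on a reversed inequality. For $s<0$ you have $h=\max\{|t|,-s\}\ge |s|$, hence $(h+|s|)/2\le h$, not $\ge h$. Since $\rhoT(x)=\max\{(h-s)/2,\,|t|\}$, it equals $(h-s)/2$ only when $h=-s$. When $|t|>-s$ it equals $|t|$ instead. In both subcases $\rhoT(x)=h$. For instance, at $(-1,5)$ your formula gives $3$, whereas $x-Tx=(-3,5)$ and $\rhoT(-1,5)=5$.

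This error is fatal to the plan. With your formula, the inequality you reduce to is false in the case $s,s'<0$, so no further subdivision into linear subcases can close it. Take $x=(-1,3)$ and $y=(-2,2)$. Then $\Delta s=\Delta h=1$ and $\lVert x-y\rVert_\infty=1$, while your formula gives $\rhoT(x)=\rhoT(y)=2$, so you would need $2\le 1$. The ``finer estimate'' $\Delta s+\Delta h\le\lVert x-y\rVert_\infty+(\Delta h-\Delta s)/2$ that you anticipated also fails here. The true displacements are $3$ and $2$, and the $P$-inequality holds with equality.

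The missing idea is exactly what the paper establishes first: $\rhoT=h$ on all of $\R^2$. From $-h\le s\le 3h$ one gets $|s-h|/2\le h$, and also $|t|\le h$, so $\rhoT\le h$. Equality follows by checking which of $|t|$, $s/3$, $-s$ attains the maximum; if $s=3h$ or $s=-h$, then $|s-h|/2=h$. With this in hand you need neither the Lipschitz bound nor the sign-based case split. For all $x,y$,
$|\Delta s+\Delta h|\le|\Delta s|+|\Delta h|\le\lVert x-y\rVert_\infty+|\rhoT(x)-\rhoT(y)|$.
This is just your argument for the case $s,s'\ge0$, applied everywhere.
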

	
	\begin{proof}
		The function $h$ is the maximum of three continuous real-valued functions, so
		$h$ is continuous. Hence $T$ is continuous. We first show that  its displacement
		function $\rho_T$ is actually equal to $h$. From the definition of $h$
we have		
		\begin{equation*}
			-h(s,t)\leq s\leq 3h(s,t)  \qquad   \implies  	\qquad  \frac{|s-h(s,t)|}{2}\leq h(s,t).
		\end{equation*}
	As  by definition of $h$ 	 we already know  $|t|\leq h(s,t)$, we have  
		\begin{equation}
			\rhoT(s,t)=\|(s,t)-T(s,t)\|_{\infty} =\Big\|	\left(\frac{s-h(s,t)}{2},t\right)\Big\|_{\infty}
			=
			\max\left\{\frac{|s-h(s,t)|}{2},|t|\right\}
			\leq h(s,t).
			\label{eq:rho-upper}
		\end{equation}
		
		At least one of the three quantities in the maximum defining $h$ is equal to
		$h$. If $h(s,t)=|t|$, then 
			\begin{equation*}
			\rhoT(s,t)=
			\max\left\{\frac{|s-h(s,t)|}{2},|t|\right\}
			\geq   |t|=h(s,t),
				\end{equation*}
		yielding, when combined with \eqref{eq:rho-upper} the equality of $h$ and $\rho_T$.
		
		 If $h(s,t)=s/3$, then $s=3h(s,t)$ and plugging this  we get
		\begin{equation*}
		\begin{aligned}
		\rhoT(s,t)=
		\max\left\{\frac{|s-h(s,t)|}{2},|t|\right\}=
		\max\left\{|h(s,t)|,|t|\right\}
		\geq   h(s,t),
	\end{aligned}
	\end{equation*}
		again giving equality.

	Finally 	if $h(s,t)=-s$, then $s=-h(s,t)$, and  again
		\begin{equation*}
			\begin{aligned}
				\rhoT(s,t)=
				\max\left\{\frac{|s-h(s,t)|}{2},|t|\right\}=
				\max\left\{|h(s,t)|,|t|\right\}
				\geq   h(s,t),
			\end{aligned}
		\end{equation*}
	giving equality.
	
		Thus we have established
		$
			\rhoT(s,t)
			=
			h(s,t).
			\label{eq:rho-equals-h}
		$
	on all of $\R^2.$	
		Now pick two arbitrary points $x=(x_1,x_2)$ and $y=(y_1,y_2)$. By definition of $T$ and the triangle inequality

	\begin{equation*}
			\begin{aligned}
				\lVert Tx-Ty\rVert_\infty =	\Big\| \left( \frac{x_1-y_1+h(x_1,x_2)-h(y_1,y_2)}{2},0    \right) \Big\|_\infty  &=\left| \frac{x_1-y_1+h(x_1,x_2)-h(y_1,y_2)}{2}   \right| \\ &\leq \frac{1}{2}\|x-y\|_{\infty}+\frac{1}{2}|\rho_T(x)-\rho_T(y)|.
			\end{aligned}
		\end{equation*}
		By definition of the displacement, this shows that  $T$ is a $P$-contraction with constant $1/2$. 
		
		It remains to show that $T$ is not an enriched contraction. Consider the
		points	$x=(1,2),$
			and
			$y=(0,1).$
		Their $h$-values are
		$h(1,2)=2,$
			and 
			$h(0,1)=1$.
		Hence $
			Tx=\left(3/2,0\right),$
			and
			$Ty=\left(1/2,0\right). $
			It follows that
			\begin{equation*}
			x-y=(1,1),
			\qquad
			Tx-Ty=(1,0),
			\qquad
			\lVert x-y\rVert_\infty=1.
		\end{equation*}
		
	Applying the inequality \eqref{eq:enriched-contraction} defining enriched contractions to $T$ with these points $x,y$ yields for any  $b\geq 0$ 
			\begin{equation*}
			\begin{aligned}
				\lVert b(x-y)+Tx-Ty\rVert_\infty
				=
				\lVert(b+1,b)\rVert_\infty 
				=b+1=(b+1)\lVert x-y\rVert_\infty.
			\end{aligned}
		\end{equation*}
Hence $T$ cannot satisfy  \eqref{eq:enriched-contraction}  for any $\theta\in [0,b+1)$, and therefore is not an enriched contraction.
	\end{proof}

\end{document}